\numberwithin{equation}{section}
\theoremstyle{plain}
\newtheorem{maintheorem}{Theorem}
\newtheorem{theorem}{Theorem}[section]
\newtheorem{corollary}[theorem]{Corollary}
\newtheorem{lemma}[theorem]{Lemma}
\newtheorem{problem}{Problem}
\theoremstyle{definition}
\newtheorem{definition}{Definition}
\renewcommand{\epsilon}{\varepsilon}
\newcommand{\spp}{\operatorname{sp}}
\begin{document}

\title[A note on basis problem in normed spaces]
{A note on basis problem in normed spaces}


\thanks{V.C. was supported by CAPES, L.S. is partially supported by FAPERJ-Funda\c{}c\~ao Carlos Chagas Filho de Amparo \`a Pesquisa do Estado do Rio de Janeiro Projects APQ1-E-26/211.690/2021 SEI-260003/015270/2021 and JCNE-E-26/200.271/2023 SEI-260003/000640/2023, by CAPES — Finance Code 001, CNPq Projeto Universal 404943/2023-3. 
V.C., J.R. and L.S. are grateful to Instituto de Matem\'atica da Universidade Federal da Bahia for the hospitality in preparation of this paper. {{\bf \textup{2010} Mathematics Subject Classification} Primary: 46B15; Secondary: 46B20, 46B28. Keywords: {Normed spaces, Schauder basis, Spectral theorem}}}


\author{Vinicius Coelho}

\author{Joilson Ribeiro}

\author{Luciana Salgado}

\address[V.C.]{Universidade Federal do Oeste da Bahia, Centro Multidisciplinar de Bom Jesus da Lapa\\
Av. Manoel Novais, 1064, Centro, 47600-000 - Bom Jesus da Lapa-BA-Brazil}
\email{viniciuscs@ufob.edu.br}

\address[J.R.]{Universidade Federal da Bahia,
Instituto de Matem\'atica e Estat\'\i stica\\
Av. Adhemar de Barros, S/N , Ondina,
40170-110 - Salvador-BA-Brazil}
\email{joilsonor@ufba.br}

\address[L.S.]{Universidade Federal do Rio de Janeiro, Instituto de
   Matem\'atica\\
   Avenida Athos da Silveira Ramos 149 Cidade Universit\'aria, P.O. Box 68530,
   21941-909 Rio de Janeiro-RJ-Brazil }
 \email{lsalgado@im.ufrj.br, lucianasalgado@ufrj.br}

\date{\today}
\maketitle

\begin{abstract}

In this note, we give a proof of the well known criterion of Banach-Grunblum and the Bessaga-Pe\l{}czy\'nski Theorem in normed spaces context, not necessarily complete (Banach) one. As application of these results, we show the Principle of Selection of Bessaga-Pe\l{}czy\'nski for normed spaces and the Spectral Theorem for compact self-adjoint operators on inner product spaces.

\end{abstract}

\section{Introduction}\label{sec:intro}

In 1933, S. Banach affirmed that \emph{every infinitely dimensional Banach space contains an infinite dimensional subspace with a basis}, without proof. In 1958, Bessaga and Pe\l{}czy\'nski \cite{bespel58} developed several generalizations and modifications of Banach's claim, and proved their well known Selection Principle in this setting.

In this paper, as a short complement of this theory, we work on Banach's problem for normed (not necessarily complete) spaces, giving an expected generalization of the previous results, but not formally proved yet, as long as we know. We also generalize Banach-Grublum's criterion in this context, and give a Spectral Theorem on inner product spaces.

Vector space basis play a key role in the most varied problems of functional analysis, as example we refer the reader to see \cite{bespel58,Mc72,Sch1927,Sch1928,VK2022,Yar2022}.

For any vector space, it is well known that there exists an algebraic basis (or Hamel basis). However, there is another notion of basis, due to J. Schauder \cite{Sch1927,Sch1928}, defined as follows:

\begin{definition}
A sequence $(x_{n})_{n=1}^{\infty}$ in a normed space $X$ is called a {\it Schauder basis} for $X$ if for each $x$ in $X$ there is an unique sequence $(a_{n})$ of scalars such that $x = \sum \limits_{i=1}^{\infty}a_{n}x_{n}$.
\end{definition}

The uniqueness of the representation allows us to consider the linear operator for each $n$ in $\mathbb{N}$:

\begin{center}
$x_{n}^{*}: X \to \mathbb{K}\text{, } x_{n}^{*}\left(\sum \limits_{i=1}^{\infty}a_{j}x_{j}\right) =a_{n}$,
\end{center}
this operators are called  \textit{coefficient operators} (or \textit{coordinates operator}).

 Let $(x_{n})_{n=1}^{\infty}$ be a Schauder basis in the normed space $(X, \Vert \cdot \Vert)$, and consider the linear space $\mathcal{L}_{X} =\{ (a_{n})_{n=1}^{\infty} $ $|$  $ \sum \limits_{i=1}^{\infty}a_{n}x_{n}$ is convergent$\}$.  A computation shows that the function $\eta:  \mathcal{L}_{X} \to \mathbb{R}$ given by $\eta((a_{n})_{n=1}^{\infty}) := \sup \left\lbrace\left\Vert \sum \limits_{i=1}^{n} a_{i}x_{i} \right\Vert : n \in \mathbb{N}  \right\rbrace $ gives a norm in $\mathcal{L}_{X}$.

In this paper, our purpose is to provide the criterion of Banach-Grunblum and the Bessaga-Pe\l{}czy\'nski Theorem for normed spaces, generalizing the Banach's version \cite{Banach32,besspel58}. In the conclusion, we explain possible applications of this theory to pure and applied mathematical sciences, as the combination of least-squares method and a Schauder basis to provide a numerical solution for a wide class of linear differential or integral equations (see \cite{PR2005}).

To deduce these results, we start with the following broad notion of Schauder basis.

\begin{definition}
Let $X$ be a normed space and $(x_{n})_{n=1}^{\infty}$  be a Schauder basis in $X$. We say that $(x_{i})_{i=1}^{\infty}$ is an \textit{essential Schauder basis} for $X$ if $T_{X}: \mathcal{L}_{X} \to X$ given by $T_{X}((a_{n})_{n=1}^{\infty}) = \sum \limits_{n=1}^{\infty} a_{n}x_{n}$ is an isomorphism.
\end{definition}

It is not difficult to show that in Banach spaces, every Schauder basis is an essential Schauder basis. This identification is important because, with it, any space that admits a Schauder basis can be seen as a space of sequences. In the original definition of Schauder, there was the requirement that coordinates functional should be continuous. However, in 1932 Banach \cite[pag 111]{Banach32} showed that in complete normed spaces this is always true. But, if the space is not complete, this assertion is false (see \cite[Example 12.5]{Sw10}).

A sequence $(x_{n})_{n=1}^{\infty}$ may not be a Schauder basis for a normed space $X$, because $\overline{[x_{n}: n \in \mathbb{N}]}$ does not reach all the space $X$. In this case, we say that:

\begin{definition}
A sequence $(x_{n})_{n=1}^{\infty}$ in a normed space $X$ is a  \textit{basic sequence} if the sequence $(x_{n})_{n}$  is a  Schauder basis for $\overline{[x_{n}: n \in \mathbb{N}]}$.
\end{definition}

In Banach spaces theory, we have the following practical and useful criterion for deciding whether a given sequence is basic or not.

\begin{theorem} \label{B-GforBanach}(Banach-Grunblum's Criterion)
A sequence $(x_{n})_{n=1}^{\infty}$ of non null vector in a Banach space $X$ is a basic sequence if, and only if, there exists $M \geqslant 1$  such that for all  sequence of scalar $(a_{n})_{n=1}^{\infty}$:

\begin{align}
\left\Vert \sum \limits_{i=1}^{m} a_{i}x_{i}  \right\Vert \leqslant M \left\Vert \sum \limits_{i=1}^{n} a_{i}x_{i}  \right\Vert
\end{align} whenever $n \geqslant m$.
\end{theorem}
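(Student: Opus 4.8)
The plan is to prove both implications, leaning on Theorem \ref{theorem11} together with the observation, recorded after the definition of an essential Schauder basis, that in a Banach space every Schauder basis is essential. Throughout I would write $Y = \overline{[x_n : n \in \mathbb{N}]}$; since $Y$ is a closed subspace of the Banach space $X$, it is itself a Banach space, and $(x_n)$ is basic precisely when it is a Schauder basis of $Y$.

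For the forward implication, suppose $(x_n)$ is basic, so it is a Schauder basis of the \emph{Banach} space $Y$, hence an essential one. By Theorem \ref{theorem11} the canonical projections $P_m\colon Y \to Y$ satisfy $K := \sup_m \|P_m\| < \infty$, and $K \geq 1$ because each $\|P_m\| \geq 1$. Given scalars $(a_i)$ and $n \geq m$, I would set $y = \sum_{i=1}^n a_i x_i \in Y$; its basis expansion has coefficients $a_i$ for $i \leq n$ and $0$ afterward, so $P_m(y) = \sum_{i=1}^m a_i x_i$. Then $\|\sum_{i=1}^m a_i x_i\| = \|P_m(y)\| \leq K\|y\| = K\|\sum_{i=1}^n a_i x_i\|$, and taking $M = K$ gives the inequality.

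For the converse, assume the inequality holds with some $M \geq 1$. First I would verify linear independence: if a finite combination vanishes, applying the inequality to its nested partial sums forces each partial sum to vanish, and subtracting consecutive ones gives $a_k x_k = 0$, whence $a_k = 0$ since $x_k \neq 0$. This makes the partial-sum maps $P_m(\sum_i a_i x_i) = \sum_{i=1}^m a_i x_i$ well defined and linear on the dense subspace $[x_n]$ of $Y$, and the inequality (with $M \geq 1$ absorbing the case of few terms) yields $\|P_m(z)\| \leq M\|z\|$ there. Because each $P_m$ takes values in the finite-dimensional, hence complete, space $[x_1,\dots,x_m]$, it extends uniquely to a bounded operator on all of $Y$ of norm $\leq M$, and the relations $P_kP_m = P_{\min(k,m)}$ persist by continuity. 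The decisive step is to show $P_m(y) \to y$ for every $y \in Y$: for $z$ in the span this is immediate once $m$ is large, and for general $y$ I would approximate $y$ by such a $z$ within $\varepsilon$ and use the uniform bound, $\|P_m(y) - y\| \leq \|P_m(y-z)\| + \|P_m(z) - z\| + \|z - y\| \leq (M+1)\varepsilon$. Granting this, $(P_m - P_{m-1})(y)$ lies in $[x_1,\dots,x_m] \cap \ker P_{m-1} = \mathbb{K}\,x_m$, so it equals $a_m x_m$ for unique scalars $a_m$, and telescoping $P_N(y) = \sum_{m=1}^N (P_m - P_{m-1})(y)$ gives $y = \sum_m a_m x_m$. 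Uniqueness follows once more from the inequality: if $\sum c_m x_m = 0$, letting $n \to \infty$ in $\|\sum_{i=1}^m c_i x_i\| \leq M\|\sum_{i=1}^n c_i x_i\|$ forces every partial sum, hence every $c_m$, to vanish. Thus $(x_n)$ is a Schauder basis of $Y$, i.e. a basic sequence.

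The main obstacle I expect is exactly the convergence $P_m(y) \to y$ in the converse direction: the whole argument hinges on promoting the trivial pointwise statement on the dense span to all of $Y$, and this is where the \emph{uniformity} of the bound $M$ is indispensable, since a term-dependent bound would not survive the approximation. By contrast, extending $P_m$ beyond the span is painless, its range being finite-dimensional, so completeness of $X$ is never used in the converse; completeness enters only in the forward direction, through the passage Schauder $\Rightarrow$ essential that licenses Theorem \ref{theorem11}.
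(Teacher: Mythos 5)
Your proof is correct. There is nothing in the paper to compare it against directly, because the paper does not prove Theorem \ref{B-GforBanach} at all: it only cites \cite[Proposition 1.1.9]{AlKa06} and \cite[Theorem 10.3.13]{bop}. The natural comparison is instead with the paper's own proof of the normed-space generalization, Theorem \ref{B-GforNormed}, and there your argument is close in spirit but differs in two genuine ways. Your forward direction is the same as the paper's: basic sequence $\Rightarrow$ essential (automatic in a Banach space, by the observation recorded after the definition of essentiality, which both you and the paper take as given) $\Rightarrow$ uniform boundedness of the canonical projections via Theorem \ref{theorem11} $\Rightarrow$ the inequality with $M=\sup_m\|P_m\|$. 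In the converse, the paper extends the truncation operators $T_n$ from the dense span into the completion $\widehat{E}$ and separately extends the coefficient functionals $\varphi_n$ to $\Phi_n$, then recovers the expansion $x=\sum_i \Phi_i(x)x_i$; you instead extend each $P_m$ directly into its finite-dimensional, hence complete, range $[x_1,\dots,x_m]$ (so no completion is ever needed), and you recover the coefficients intrinsically from the telescoping identity $(P_m-P_{m-1})(y)\in\mathbb{K}\,x_m$ rather than as values of extended functionals. Your route is leaner and makes the algebraic structure (the relations $P_kP_m=P_{\min(k,m)}$) do the work that the $\Phi_i$ do in the paper; the paper's route has the advantage that the functionals $\Phi_i$ it constructs are exactly the coordinate functionals needed elsewhere, and that its final step also verifies \emph{essentiality} of the basis (the isomorphism property of $T_E$), which your argument does not address but which is not required for the Banach-space statement, where it is automatic. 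Your closing remark is precisely the structural point the paper exploits: completeness enters only through ``Schauder $\Rightarrow$ essential'' in the forward direction, while the converse is completeness-free, which is why it survives verbatim in Theorem \ref{B-GforNormed}.
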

\begin{proof}
See e.g. \cite[Proposition 1.1.9]{AlKa06} or \cite[Theorem 10.3.13]{bop}.
\end{proof}

One of the consequences of the above theorem was proved by S. Banach in 1932 and another one, by M. Grunblum in 1941. We generalize the Theorem \ref{B-GforBanach} for normed spaces setting. For this purpose, it is necessary to introduce the concept of essential basic sequence.

\begin{definition}
A sequence $(x_{n})_{n=1}^{\infty}$ in a normed space $X$ is called an \textit{essential basic sequence} if it is an essential Schauder basis for $\overline{[x_{n}: n \in \mathbb{N}]}$.
\end{definition}

Note that if $X$ is a Banach space, then every basic sequence is an essential basic sequence. 

Let $(S,\| \cdot \|)$ be a normed space. We denote by $\widehat{S}$ the completion of $S$ such that $S$ is dense in $\widehat{S}$.

\begin{maintheorem} \label{B-GforNormed} (Banach-Grunblum's criterion for normed spaces)
Let $(x_{n})_{n=1}^{\infty}$ be a sequence of non null vectors in a normed space $X$. Then the following conditions are equivalents.
\begin{itemize}
\item[$(i)$] $(x_{n})_{n=1}^{\infty}$ is an essential  Schauder basis for $\widehat{[x_{n}: n \in \mathbb{N}]} \subseteq \widehat{X}$;
\item[$(ii)$]  $(x_{n})_{n=1}^{\infty}$ is an essential basic sequence in  $ X$;
\item[$(iii)$] there exists $M \geqslant 1$  such that for all sequence of scalar $(a_{n})_{n=1}^{\infty}$:

\begin{align}\label{eq12}
\left\Vert \sum \limits_{i=1}^{m} a_{i}x_{i}  \right\Vert \leqslant M \left\Vert \sum \limits_{i=1}^{n} a_{i}x_{i} \right\Vert
\end{align} whenever $n \geqslant m$.
\end{itemize}
\end{maintheorem}

As it has been said before, in his classic book \cite{Banach32}, Banach announced, without proof, that in every complete normed (Banach) space of infinite dimension there is an infinite dimensional subspace with Schauder basis. The proof of this result only appeared in the literature in 1958, in a celebrated article by Bessaga and Pe\l{}czy\'nski \cite{bespel58} (in the same year Bernard R. Gelbaum \cite{G58} also presented another proof). The demonstration presented in \cite{bespel58} is a consequence of the main result of their work, which became known as \emph{selection's principle of Bessaga-Pe\l{}czy\'nski}.

Here, we show the Bessaga-Pe\l{}czy\'nski's Theorem for normed spaces, and so the Selection Principle for normed spaces, as an application. 

We introduce the definition of \emph{equivalent sequence} for Essential Schauder's Basis.

\begin{definition}
Let $(x_{n})_{n=1}^{\infty}$ be an essential Schauder's basis in a normed space $X$ and $(y_{n})_{n=1}^{\infty}$ be an essential Schauder's basis in a normed space $Y$. We say that $(x_{n})_{n=1}^{\infty}$ is \textit{equivalent} to $(y_{n})_{n=1}^{\infty}$. In this case, we write  $(x_{n})_{n=1}^{\infty}\approx (y_{n})_{n=1}^{\infty}$, to means that, for any scalar's sequence, the series $\sum \limits_{n=1}^{\infty} a_{n}x_{n}$ is convergent in $\overline{[x_{n}: n \in \mathbb{N}]} \subseteq X$ if, and only if, the series $\sum \limits_{n=1}^{\infty} a_{n}y_{n}$ is convergent in $\overline{[y_{n}: n \in \mathbb{N}]} \subseteq Y$.
\end{definition}

We are able to present the Bessaga-Pe\l{}czy\'nski Theorem for normed spaces.

\begin{maintheorem}\label{th:1}
Let $X$ be a normed space, $(x_{n})_{n=1}^{\infty}$ be an essential basic sequence in $X$, and $(x_{n}^{*})_{n=1}^{\infty}$ be the functional coefficients. If  $(y_{n})_{n=1}^{\infty}$ is a sequence in $X$ such that

\begin{align}
0<\sum \limits_{n=1}^{\infty} \| x_{n} - y_{n} \|. \| x_{n}^{*} \|=: \lambda < 1
\end{align}

then $(y_{n})_{n=1}^{\infty}$ is an essential basic sequence in $X$ equivalent  to $(x_{n})_{n=1}^{\infty}$.
\end{maintheorem}

\section{Applications}\label{sec:app}

\subsection{Bessaga-Pe\l{}czy\'nski's Selection Principle}

Using Theorem \ref{B-GforNormed}  and  \ref{th:1}, we obtain the following Bessaga-Pe\l{}czy\'nski's Selection Principle and its consequence for normed spaces, as follow.

\begin{definition}
Let $(x_{n})_{n=1}^{\infty}$ be an essential Schauder's basis in  a normed space $X$, and $(k_{n})_{n=0}^{\infty}$ be a sequence strictly increasing of  positive integers, with $k_{0}=0$. A sequence of non null vectors $(y_{n})_{n=1}^{\infty}$ in $X$ is called \textit{essential block basic sequence relative to $(x_{n})_{n=1}^{\infty}$ } if

\begin{center}
$y_{n} = \sum \limits_{i=k_{n-1}+1}^{k_{n}} b_{i}x_{i}$
\end{center}

where  $b_{i} \in \mathbb{K}$.
\end{definition}

The Bessaga-Pe\l{}czy\'nski's Selection Principle is the following.

\begin{theorem}\label{mth:2} Let $(x_{n})_{n=1}^{\infty}$ be an essential Schauder basis in a  normed space $X$ and $(x_{n}^{*})_{n=1}^{\infty}$ be the functional coefficients. If $(y_{n})_{n=1}^{\infty}$ is a sequence in $X$ such that $\inf \limits_{n} \|y_{n}\| > 0$ and

\begin{center}
$\lim \limits_{n\to \infty} x_{i}^{*}(y_{n}) = 0$ for all $i \in \mathbb{N}$
\end{center}

then $(y_{n})_{n=1}^{\infty}$ contains  an essential basic subsequence equivalent to block basic sequence relative  to $(x_{n})_{n=1}^{\infty}$.
\end{theorem}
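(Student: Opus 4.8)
The plan is to run a gliding-hump (diagonal) argument to extract a subsequence of $(y_n)$ that is a small perturbation of a block basic sequence relative to $(x_n)$, and then to invoke the perturbation result, Theorem \ref{th:1}, to conclude that the subsequence is itself essential basic and equivalent to that block basic sequence. Throughout I write $K := \sup_n \|P_n\| < \infty$ for the essential constant of $(x_n)$, which is finite by Theorem \ref{theorem11}, and $\delta := \inf_n \|y_n\| > 0$. Since $(x_n)$ is an essential Schauder basis of $X$, every $y \in X$ satisfies $y = \sum_{i=1}^{\infty} x_i^*(y) x_i = \lim_{p\to\infty} P_p y$, with $\|P_p\| \le K$ for all $p$.

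First I would fix in advance a sequence of positive reals $(\epsilon_k)$ with $\sum_{k=1}^{\infty} \epsilon_k < \delta/(4K)$ and $\epsilon_k < \delta/2$ for every $k$. Setting $p_0 = 0$ and $n_0 = 0$, I construct indices $n_1 < n_2 < \cdots$ and $0 = p_0 < p_1 < p_2 < \cdots$ recursively. Given $n_{k-1}$ and $p_{k-1}$, note that $P_{p_{k-1}} y_n = \sum_{i=1}^{p_{k-1}} x_i^*(y_n) x_i$ is a fixed finite linear combination whose coefficients all tend to $0$ as $n\to\infty$ by hypothesis; hence $P_{p_{k-1}} y_n \to 0$, and I may choose $n_k > n_{k-1}$ with $\|P_{p_{k-1}} y_{n_k}\| < \epsilon_k/2$. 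Since $P_p y_{n_k} \to y_{n_k}$ as $p\to\infty$, I next choose $p_k > p_{k-1}$ with $\|y_{n_k} - P_{p_k} y_{n_k}\| < \epsilon_k/2$, and I set
\[
u_k := P_{p_k} y_{n_k} - P_{p_{k-1}} y_{n_k} = \sum_{i=p_{k-1}+1}^{p_k} x_i^*(y_{n_k})\, x_i .
\]
Then $\|y_{n_k} - u_k\| \le \|y_{n_k} - P_{p_k} y_{n_k}\| + \|P_{p_{k-1}} y_{n_k}\| < \epsilon_k$, and because $\epsilon_k < \delta/2$ we get $\|u_k\| \ge \|y_{n_k}\| - \|y_{n_k} - u_k\| > \delta - \epsilon_k > \delta/2 > 0$. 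In particular each $u_k$ is a nonzero block supported on the consecutive interval $(p_{k-1},p_k]$, so $(u_k)$ is a block basic sequence relative to $(x_n)$.

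Next I would verify that $(u_k)$ is an essential basic sequence and bound its coefficient functionals. For scalars $(b_k)$ and $n \ge m$, the truncations $\sum_{k=1}^{m} b_k u_k$ and $\sum_{k=1}^{n} b_k u_k$ are initial segments of one and the same finitely supported expansion in $(x_n)$, so the inequality \eqref{eq12} for $(x_n)$ (whose constant may be taken to be $K$) transfers verbatim to give $\|\sum_{k=1}^{m} b_k u_k\| \le K \|\sum_{k=1}^{n} b_k u_k\|$; by Theorem \ref{B-GforNormed}, $(u_k)$ is essential basic with essential constant at most $K$. Hence its coefficient functionals $u_k^*$ are bounded (Theorem \ref{fccontinuos}), and writing $Q_m := \sum_{k=1}^{m} u_k^*(\cdot)\,u_k$ for the canonical projections of $(u_k)$, every $z \in \overline{[u_k : k \in \mathbb{N}]}$ satisfies $|u_k^*(z)|\,\|u_k\| = \|Q_k z - Q_{k-1} z\| \le 2K\|z\|$, so that $\|u_k^*\| \le 2K/\|u_k\| \le 4K/\delta$. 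With this uniform bound in hand, the choice $\sum_k \epsilon_k < \delta/(4K)$ made at the outset yields $\sum_{k=1}^{\infty} \|u_k - y_{n_k}\|\,\|u_k^*\| \le (4K/\delta)\sum_k \epsilon_k < 1$, and Theorem \ref{th:1}, applied with the essential basic sequence $(u_k)$ in the role of $(x_n)$ and the perturbation $(y_{n_k})$ in the role of $(y_n)$, shows that $(y_{n_k})$ is an essential basic sequence equivalent to $(u_k)$, which is the desired conclusion.

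The step I expect to be the main obstacle is the apparent circularity in the perturbation estimate: the admissible size of the error $\epsilon_k$ seems to depend on $\|u_k^*\|$, which is known only once $u_k$, and hence $\epsilon_k$, has been fixed. This is resolved by the a priori uniform bound $\|u_k^*\| \le 4K/\delta$, which becomes available the moment $\epsilon_k < \delta/2$ forces $\|u_k\| > \delta/2$, independently of the remainder of the construction; this is what lets me commit to a summable $(\epsilon_k)$ before building the blocks. The other delicate point is the ordering inside the recursion — choosing $n_k$ first (to annihilate the head $P_{p_{k-1}} y_{n_k}$) and only then $p_k$ (to annihilate the tail of $y_{n_k}$) — which is precisely what makes the supports of the $u_k$ pairwise disjoint and consecutive, so that $(u_k)$ is a genuine block basic sequence.
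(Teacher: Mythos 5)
Your proposal is correct and follows exactly the route the paper prescribes: the paper's ``proof'' is merely the instruction to combine Theorem \ref{th:1} with the gliding-hump argument of Theorem 4.3.19 in \cite{Me98}, which is precisely what you carry out (extract blocks $u_k$ via the coefficient hypotheses, verify they form an essential basic sequence through the Banach--Grunblum criterion of Theorem \ref{B-GforNormed}, obtain the a priori bound $\|u_k^*\| \le 4K/\delta$, and perturb via Theorem \ref{th:1}). In fact your writeup supplies the details the paper leaves entirely to the reference, including the two points genuinely specific to the normed (incomplete) setting: that blocks of an essential Schauder basis are \emph{essential} basic, and that the uniform bound on $\|u_k^*\|$ can be fixed before the recursion, avoiding circularity.
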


\begin{corollary}\label{mcor:1}
Let $X$ be a normed space, $(y_{n})_{n=1}^{\infty}$ is a sequence in $X$ such that $\inf \limits_{n} \|y_{n}\| > 0$ and $y_{n} \to 0$ weakly. Then   $(y_{n})_{n=1}^{\infty}$ contains an essential basic subsequence.
\end{corollary}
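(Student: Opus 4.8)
The plan is to extract a subsequence of $(y_{n})$ by a gliding-hump construction and then certify that it is an essential basic sequence via the Banach--Grunblum criterion for normed spaces (Theorem \ref{B-GforNormed}). Note that Theorem \ref{mth:2} presupposes an essential Schauder basis of the ambient space $X$, which a general normed space need not possess, so I would not apply it directly; instead the construction is carried out by hand. The weak-null hypothesis enters only through the fact that $f(y_{n}) \to 0$ for every $f \in X^{*}$, while $\delta := \inf_{n}\|y_{n}\| > 0$ keeps the selected vectors uniformly bounded away from $0$.

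The technical heart is a perturbation lemma: for every finite-dimensional subspace $E \subseteq X$ and every $\epsilon > 0$ there is an $N$ such that
\[
\|e\| \leq (1+\epsilon)\,\|e + a\,y_{n}\| \qquad \text{for all } e \in E,\ \text{all scalars } a,\ \text{and all } n \geq N .
\]
I would prove this by reducing to the unit sphere $S_{E}$, which is compact because $\dim E < \infty$, and splitting the scalar $a$ into two regimes. When $|a|$ is large the estimate $\|e + a\,y_{n}\| \geq |a|\delta - \|e\|$ forces the inequality independently of $n$ (it suffices to take $|a| \geq 2/\delta$). When $|a|$ is bounded I would cover $S_{E}$ by finitely many balls centered at points $e_{1},\dots,e_{m}$, choose Hahn--Banach norming functionals $f_{j} \in X^{*}$ with $\|f_{j}\| = 1$ and $f_{j}(e_{j}) = 1$, and use $f_{j}(y_{n}) \to 0$ to pick $N$ so that $\max_{j}|f_{j}(y_{n})|$ is uniformly small for $n \geq N$. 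Then for $e \in S_{E}$ close to some $e_{j}$ one gets $\|e + a\,y_{n}\| \geq |f_{j}(e + a\,y_{n})| \geq (1-\eta) - |a|\,|f_{j}(y_{n})|$, which exceeds $1/(1+\epsilon)$ once the covering mesh $\eta$ and the threshold $N$ are chosen appropriately.

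With the lemma available I would run the induction. Fix $\epsilon_{k} > 0$ with $M := \prod_{k=1}^{\infty}(1+\epsilon_{k}) < \infty$ (for instance $\epsilon_{k} = 2^{-k}$), so that $M \geq 1$. Set $n_{1} = 1$; having chosen $n_{1} < \dots < n_{k}$, apply the lemma to $E_{k} := [\,y_{n_{1}},\dots,y_{n_{k}}\,]$ with $\epsilon = \epsilon_{k}$ and take $n_{k+1} > n_{k}$ beyond the resulting threshold, so that $\|e\| \leq (1+\epsilon_{k})\|e + a\,y_{n_{k+1}}\|$ for every $e \in E_{k}$ and every scalar $a$. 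Writing $s_{q} = \sum_{i=1}^{q} a_{i} y_{n_{i}}$ and applying the step inequality successively to $s_{m} \in E_{m},\, s_{m+1} \in E_{m+1},\dots$ yields, for all scalars $(a_{i})$ and all $m \leq p$,
\[
\Big\| \sum_{i=1}^{m} a_{i} y_{n_{i}} \Big\| \leq \prod_{j=m}^{p-1}(1+\epsilon_{j})\,\Big\| \sum_{i=1}^{p} a_{i} y_{n_{i}} \Big\| \leq M\,\Big\| \sum_{i=1}^{p} a_{i} y_{n_{i}} \Big\| ,
\]
which is precisely the Banach--Grunblum inequality \eqref{eq12} with constant $M \geq 1$. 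By Theorem \ref{B-GforNormed}, $(y_{n_{k}})_{k=1}^{\infty}$ is an essential basic sequence, which is the assertion of the corollary.

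I expect the perturbation lemma to be the main obstacle, specifically stitching together the two scalar regimes and checking that the norming functionals can be chosen uniformly over $S_{E}$ from a finite sub-cover. Once that is in place, the inductive selection and the telescoping are routine, and the decisive passage from partial-sum domination to the conclusion ``essential basic sequence'' is delivered verbatim by the normed-space Banach--Grunblum criterion.
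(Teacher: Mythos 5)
Your proof is correct, but it takes a genuinely different route from the paper. The paper's own proof is a reduction: it notes that $\overline{[y_{n}: n \in \mathbb{N}]}$ is separable, invokes the Banach--Mazur theorem to embed it isometrically into $C[0,1]$, uses the classical Schauder basis of $C[0,1]$ (automatically essential, since $C[0,1]$ is complete), applies the selection principle (Theorem \ref{mth:2}) to the weakly null, norm-bounded-below sequence $(T(y_{n}))_{n=1}^{\infty}$ inside $C[0,1]$, and then pulls the resulting essential basic subsequence back through the isometry $T^{-1}$. This is exactly how the paper sidesteps the obstruction you identified --- that Theorem \ref{mth:2} needs an ambient essential Schauder basis which a general normed space lacks --- whereas you sidestep it by not using Theorem \ref{mth:2} at all. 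Your route is Mazur's gliding-hump construction done directly in $X$: the finite-dimensional perturbation lemma (compactness of $S_{E}$, Hahn--Banach norming functionals, the two regimes in $|a|$ glued by the lower bound $\inf_{n}\|y_{n}\| = \delta > 0$) is sound in an incomplete space, since it uses nothing beyond Hahn--Banach and finite-dimensional compactness, and the telescoped inequality is precisely \eqref{eq12}, so Theorem \ref{B-GforNormed} applies to the non-null vectors $y_{n_{k}}$. What the paper's approach buys is brevity given its machinery, plus the extra structural conclusion that the subsequence is equivalent (in $C[0,1]$) to a block basic sequence; what yours buys is self-containment --- it avoids the nontrivial Banach--Mazur embedding and Theorem \ref{mth:2}, whose normed-space proof the paper only cites --- together with quantitative control, since $M = \prod_{k}(1+\epsilon_{k})$ can be made arbitrarily close to $1$. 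It is also closer in spirit to the paper's own Section 4, where Theorem \ref{mthm:Banach-Mazur} is proved by exactly this pattern: run the traditional construction, then certify it with the normed-space Banach--Grunblum criterion.
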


\subsection{Banach problem for normed spaces}

As mentioned before, the Bessaga and Pe\l{}czy\'nski's proof that in every Banach space of infinite dimension there is an infinite-dimensional subspace with Schauder basis is a consequence of the nowadays known as the Bessaga-Pe\l{}czy\'nski selection principle. As we have obtained this result for normed spaces, it is possible to follow the same steps and to show the same result in this broad setting. We stress that this is already known result (see \cite{Day62}). But here, the proof is done in an elementary way, by using only the Banach-Grunblum's Criterion for normed spaces (Theorem \ref{B-GforNormed}).

\begin{theorem}\label{mthm:Banach-Mazur}
Every normed space contains an infinite-dimensional closed subspace with Schauder basis in which the canonical projections $(P_{n})_{n}$ are bounded operators. Moreover, $\sup \limits_{n} \|P_{n}\| < + \infty $.
\end{theorem}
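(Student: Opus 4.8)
The plan is to construct the desired subspace by the classical Mazur gliding-hump technique, adapted to the normed (not necessarily complete) setting, and then to invoke the Banach-Grunblum criterion for normed spaces, Theorem \ref{B-GforNormed}, to recognize the resulting sequence as an essential basic sequence. Throughout, $X$ is assumed to be infinite-dimensional, since otherwise the statement is vacuous.

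The key step is a lemma of \emph{Mazur type}: given a finite-dimensional subspace $F \subseteq X$ and $\varepsilon > 0$, there exists $x \in X$ with $\|x\| = 1$ such that $\|y\| \leq (1+\varepsilon)\|y + tx\|$ for every $y \in F$ and every scalar $t$. To prove it, I would use that the unit sphere $S_F$ of $F$ is compact (finite dimension), extract a finite $\delta$-net $y_{1}, \dots, y_{k}$ of $S_F$, and choose by Hahn-Banach --- which is available in any normed space, completeness playing no role --- functionals $f_{1}, \dots, f_{k} \in X^{*}$ with $\|f_{j}\| = 1$ and $f_{j}(y_{j}) = 1$. Since $X$ is infinite-dimensional, $\bigcap_{j=1}^{k} \ker f_{j} \neq \{0\}$, and any unit vector $x$ in this intersection works: for $y \in S_F$ pick $y_{j}$ with $\|y - y_{j}\| < \delta$ and estimate $\|y + tx\| \geq |f_{j}(y+tx)| = |f_{j}(y)| \geq 1 - \delta$, since $f_{j}(tx)=0$. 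Choosing $\delta \leq \varepsilon/(1+\varepsilon)$ and rescaling by homogeneity yields the claim for all $y \in F$.

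With the lemma in hand, I would build the sequence $(x_{n})$ inductively: fix a summable sequence $(\varepsilon_{n})$ of positive numbers, so that $M := \prod_{n=1}^{\infty}(1+\varepsilon_{n}) < \infty$ (e.g. $\varepsilon_{n} = 2^{-n}$), take any unit vector $x_{1}$, and, having chosen $x_{1}, \dots, x_{n}$, apply the lemma to $F = [x_{1}, \dots, x_{n}]$ with $\varepsilon_{n}$ to obtain a unit vector $x_{n+1}$. Applying the inequality at each step with $y = \sum_{i=1}^{k} a_{i}x_{i}$ and $t x_{k+1} = a_{k+1}x_{k+1}$, a telescoping induction on $n-m$ gives, for all scalars and all $n \geq m$,
\[
\left\| \sum_{i=1}^{m} a_{i} x_{i} \right\| \;\leq\; \prod_{i=m}^{n-1}(1+\varepsilon_{i}) \left\| \sum_{i=1}^{n} a_{i} x_{i} \right\| \;\leq\; M \left\| \sum_{i=1}^{n} a_{i} x_{i} \right\|,
\]
which is exactly inequality (\ref{eq12}). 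By Theorem \ref{B-GforNormed}, $(x_{n})$ is an essential basic sequence, hence an essential Schauder basis --- in particular a Schauder basis --- of the infinite-dimensional closed subspace $Y = \overline{[x_{n}: n \in \mathbb{N}]}$. Finally, Theorem \ref{theorem11} applied to $Y$ gives that the canonical projections $(P_{n})_{n}$ are bounded with $\sup_{n}\|P_{n}\| \leq \|(T_{Y})^{-1}\| < \infty$, which completes the proof.

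The step I expect to be the main obstacle is the Mazur lemma, and precisely the point of checking that no completeness is needed: the compactness of $S_F$, the Hahn-Banach extension, and the nontriviality of $\bigcap_{j} \ker f_{j}$ all hold in an arbitrary infinite-dimensional normed space, so the only care required is the bookkeeping relating $\delta$ to $\varepsilon$ and confirming that the product $M=\prod(1+\varepsilon_{n})$ stays finite.
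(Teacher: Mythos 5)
Your proposal is correct and takes essentially the same route as the paper: construct the Mazur gliding-hump sequence, verify inequality (\ref{eq12}) with constant $M=\prod_{n}(1+\varepsilon_{n})$, and then apply Theorem \ref{B-GforNormed} followed by Theorem \ref{theorem11} to get boundedness and the uniform bound on the projections. The only difference is one of detail: the paper simply cites the traditional construction from \cite{daws,diestel}, whereas you write out the Mazur-type lemma and the inductive selection explicitly (correctly noting that Hahn--Banach, compactness of the finite-dimensional sphere, and the nontriviality of $\bigcap_{j}\ker f_{j}$ require no completeness), so your version is self-contained where the paper's is not.
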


\subsection{Spectral Theorem on inner product spaces}
Let $T: N \to N$ be a compact self-adjoint operator, and $\widehat{T}: \widehat{N} \to \widehat{N}$ be the  compact self-adjoint operator such that  $\widehat{T}$ is the bounded linear extension of $T$.

The following is the Spectral Theorem on inner product spaces.

\begin{theorem}\label{sptns}
Let $N$ be  an inner product space, $T: N \to N$ be a compact self-adjoint operator such that $T \neq 0$,  $\spp (T)$ be the spectrum of $T$, and $\spp (\widehat{T})$ be the spectrum of $\widehat{T}$.
\begin{itemize}
\item[$(i)$] If  $\spp (T)$ is an infinite set such that $\spp (T) \subsetneq \spp (\widehat{T})$, then for each $x \in N$, there exists $w_{x} \in N $ such that  $T(x) =  \sum \limits_{i=1}^{\infty} x_{i} \lambda_{i} \langle x, x_{i} \rangle + w_{x}$;
\item[$(ii)$] If  $\spp (T)$ is an infinite set such that $\spp (T) =\spp (\widehat{T})$,   then for each $x \in N$,  $T(x) =  \sum \limits_{i=1}^{\infty} x_{i} \lambda_{i} \langle x, x_{i} \rangle$;
\item[$(iii)$] If the cardinality of $\spp (T)$  is a non-null natural number,  then there exists a natural number $s \in \mathbb{N} \setminus \{0\}$ such that for all $x \in N$,  $T(x) =  \sum \limits_{i=1}^{s} x_{i} \lambda_{i} \langle x, x_{i} \rangle$

\end{itemize}

where $(\lambda_{i})'s$ are the eigenvalues  of  $T$.

\end{theorem}

\section{Problems}\label{sec:prob}

It is known that every Banach space with Schauder basis is separable (see \cite[Proposition 4.1.10]{Me98}). Banach has questioned if the converse holds: Does every separable Banach space admits a Schauder basis? In \cite{enflo}, Enflo gave a negative answer to this question, exhibiting a separable Banach space that has no Schauder basis.

\begin{problem} (The basis problem for normed spaces)
Let $X$ be a separable normed space which is not a Banach one. Does $X$ admits an essential Schauder basis?
\end{problem}

In the sequence, we introduce the definition of (essential) unconditional basis.

\begin{definition}
Let $X$ be a Banach space, and $(x_{n})_{n=1}^{\infty}$ a Schauder basis in $X$. The basis $(x_{n})_{n=1}^{\infty}$ is an unconditional basis if, for each $x$ in $X$,  there exists a unique expansion of the form

 \begin{center}
$x= \sum \limits_{n=1}^{\infty} a_{n}x_{n}$
 \end{center}
where the sum converges unconditionally.
\end{definition}

Gowers and Maurey in \cite{GoMa} showed that there exists a Banach space that do not contain unconditional basis.

\begin{definition}
Let $X$ be a normed space, and $(x_{n})_{n=1}^{\infty}$ an essential  Schauder basis in $X$. The basis $(x_{n})_{n=1}^{\infty}$ is an essential unconditional basis if, for each $x$ in $X$,  there exists a unique expansion of the form

 \begin{center}
$x= \sum \limits_{n=1}^{\infty} a_{n}x_{n}$
 \end{center}
where the sum converges unconditionally.
\end{definition}

We end this section with the following question.

\begin{problem} (The unconditional basic sequence problem for normed spaces)
Let $X$ be a normed space which it is not a Banach space. Does $X$ admits an essential unconditional basis?
\end{problem}

\subsection{Organization of the text}
In Sections \ref{sec:intro}, \ref{sec:app} and \ref{sec:prob}, we provide preliminary definitions
in order to present the statements of the main results together with some applications and
problems. In Section \ref{sec:aux.results} we state some auxiliary results and prove some useful
properties of Essential Schauder Basis. In Section \ref{sec:papp}, we give the proofs of applications of main results, divided
into three subsections \ref{sec:papp.1}, \ref{sec:papp.2} and \ref{sec:papp.3}, one for each of the applcations.  
In Section \ref{sec:main.result}, we give the proofs of our theorems, divided into two subsections \ref{sec:main.result.A} and 
\ref{sec:main.result.B}, one for each of the Main Theorems \ref{B-GforNormed} and \ref{th:1}, respectively.

\section{Auxilar Results}\label{sec:aux.results}

\subsection{Essential Schauder basis}

In the original Schauder's definition, there was the requirement that coordinates functional should be continuous. However, in \cite[pag 111]{Banach32} is proved that this condition holds for Banach spaces. But, if the space is not complete, this assertion is false (see \cite[Example 12.5]{Sw10}). We observe that if there exists an essential Schauder basis, then we recover this important property, as follows.

\begin{theorem}\label{fccontinuos}
Each coordinate functional associated to an essential Schauder basis $(x_{n})_{n=1}^{\infty}$  is a bounded linear application.
\end{theorem}
\begin{proof}
Fix $n \in \mathbb{N}$, and let $x \in X$ be an arbitrary element, so we can write $x =\sum \limits_{i=1}^{\infty}a_{j}x_{j}$. We are going to show that $x_{n}^{*}: X \to \mathbb{K}$ is a bounded linear application. In fact,

\begin{align*}
\|x_{n}\|\cdot |x_{n}^{*}(x)| =\|x_{n}x_{n}^{*}(x)\|  \\   
 = \Vert \sum \limits_{i=1}^{n}x^{*}_{i}(x)x_{i} - \sum \limits_{i=1}^{n-1}x^{*}_{i}(x)x_{i}\Vert\\
  \leq \Vert  \sum \limits_{i=1}^{n}x^{*}_{i}(x)x_{i} \Vert +   \Vert \sum \limits_{i=1}^{n-1}x^{*}_{i}(x)x_{i} \Vert\\ 
  \leq   2 \eta((a_{j})_{j=1}^{\infty})\leq 2  \Vert (T_{X})^{-1} \Vert \cdot \|x\|.
\end{align*}

\end{proof}

\begin{corollary}
Let $(x_{n})_{n=1}^{\infty}$ be an essential Schauder basis for the normed space $X$. Then, for each $n\in\mathbb{N}$, the linear operator

\begin{center}
$P_{n}: X \to X\text{, } P_{n}\left(\sum \limits_{i=1}^{\infty}a_{j}x_{j}\right) = \sum \limits_{i=1}^{n}a_{j}x_{j}$
\end{center}

is bounded.
\end{corollary}
\begin{proof}
Note that  $P_{n}(\cdot) = \sum \limits_{i=1}^{n} x^{*}_{i}(\cdot)x_{i}$, so by Theorem  (\ref{fccontinuos}), $P_{n}$ is a bounded operator.
\end{proof}

In the proof of the next result, we are going to use some arguments from the proof of Corollary 4.1.17 of \cite{Me98}.

\begin{theorem}\label{theorem11}
Let $(x_{n})_{n=1}^{\infty}$ be an essential Schauder basis for a normed space $X$ and $(P_{n})_{n}^{\infty}$ the canonical projections. Then $\sup \limits_{n} \| P_{n}\| < \infty$.
\end{theorem}
\begin{proof}
For each $x$ in $X$ such that $\|x\| \leq 1$, we have that

$ \left\Vert  P_{n}(x) \right\Vert =  \left\Vert  \sum \limits_{i=1}^{n}a_{j}x_{j} \right \Vert \leq \sup \limits_{n} \left\Vert \sum \limits_{i=1}^{n}a_{j}x_{j} \right \Vert =  \eta((a_{j})_{j=1}^{\infty}) \leq  \left\Vert (T_{X})^{-1} \right\Vert \cdot \|x\| \leq   \left\Vert (T_{X})^{-1} \right\Vert  $.

So, $\| P_{n}\| \leq  \|(T_{X})^{-1}\|$ for all $n$ in $\mathbb{N}$  and  $\sup \limits_{n} \| P_{n}\| \leq \|(T_{X})^{-1}\|$, and we are done.
\end{proof}

The number $K_{(x_{n})_{n=1}^{\infty}}:= \sup \limits_{n} \| P_{n}\|$  is called \textit{essential constant of basis} $(x_{n})_{n=1}^{\infty}$. Note that $\|P_{n}\| \geq 1$ for all $n$, then  $K_{(x_{n})_{n=1}^{\infty}} \geq 1$.

\begin{corollary}\label{cor11}
Let $(x_{n})_{n=1}^{\infty}$ be an essential Schauder basis for a normed space $X$ and $(x_{n}^*)_{n=1}^{\infty}$ be the coefficient operators. Then, for each $k \in \mathbb{N}$,

\begin{equation}
1 \leq \| x_{k}^{*}\| \cdot \|x_{k}\| \leq 2 K_{(x_{n})_{n=1}^{\infty}}
\end{equation}
\end{corollary}
\begin{proof}
First, we note that
\begin{equation}
1 = x_{k}^{*}(x_{k}) = |x_{k}^{*}(x_{k})| \leq  \|x_{k}^{*}\| \cdot \|x_{k}\|.
\end{equation}

Now, let $x$ be an arbitrary element of $X$ such that $x \neq 0$, so

\begin{align*}
\|x_{k}\|\cdot |x_{k}^{*}(x)| =\|x_{k}x_{k}^{*}(x)\| \\= \Vert \sum \limits_{i=1}^{k}x^{*}_{i}(x)x_{i} - \sum \limits_{i=1}^{k-1}x^{*}_{i}(x)x_{i} \Vert\\
  \leq  \Vert  \sum \limits_{i=1}^{k}x^{*}_{i}(x)x_{i}  \Vert +    \Vert  \sum \limits_{i=1}^{k-1}x^{*}_{i}(x)x_{i} \Vert\\ =  \Vert  P_{k}(x)  \Vert  +  \Vert P_{k-1}(x)  \Vert \\
\leq   \Vert  P_{k}  \Vert \cdot \|x\| +  \Vert  P_{k-1}(x)   \Vert \cdot\|x\| \\ \leq 2K_{(x_{n})_{n=1}^{\infty}}\|x\|.
\end{align*}

This implies that\\
$\|x_{k}\|\cdot \|x_{k}^{*}\| \leq  2K_{(x_{n})_{n=1}^{\infty}}$.

\end{proof}

\section{Proof of applications of main results}\label{sec:papp}

\subsection{Proof of Bessaga-Pe\l{}czy\'nski's Selection Principle}\label{sec:papp.1}

Using Theorem \ref{B-GforNormed}, it is possible to show the following result.

\begin{corollary}\label{cor22}
Let $(x_{n})_{n=1}^{\infty}$  be an essential basic sequence in a normed space $X$. Then $K_{(x_{n})_{n=1}^{\infty}} = \inf \{M:$ $M$ satisfies  (\ref{eq12})$\}$.

\end{corollary}

We are going to provide the definitions and required results to show the Bessaga-Pe\l{}czy\'nski's Selection Principle for normed spaces.

\begin{lemma}\label{lemma1}
Let $(x_{n})_{n=1}^{\infty}$ be an essential Schauder's basis in a normed space $X$ and $(y_{n})_{n=1}^{\infty}$ be an essential block basic sequence relative to $(x_{n})_{n=1}^{\infty}$. Then $(y_{n})_{n=1}^{\infty}$ is an essential basic sequence in $X$ and   $K_{(y_{n})_{n=1}^{\infty}}\leq  K_{(x_{n})_{n=1}^{\infty}}$.
\end{lemma}

\begin{proof}
Use the Banach-Grunblum's criterion for normed spaces, Theorem \ref{B-GforNormed}, and Corollary \ref{cor22}.
\end{proof}

Following the proof given in Theorem 4.3.19 of \cite{Me98} for Banach spaces, and using Corollary \ref{cor11}, Lemma \ref{lemma1} and Theorem \ref{th:1} we obtain the Bessaga-Pe\l{}czy\'nski's selection principle for normed spaces, Theorem \ref{mth:2}.

In the proof of Corollary \ref{mcor:1}, we use the arguments from the proof of Corollary 10.4.9 of \cite{bop}.

\begin{proof}[Proof of Corollary \ref{mcor:1}]
Consider the subspace $\overline{[y_{n}: n \in \mathbb{N}]}$ of $X$, that is separable. By Banach-Mazur's Theorem, $\overline{[y_{n}: n \in \mathbb{N}]}$ is isometrically isomorphic to a subspace of $C[0,1]$, so there exists a linear isometry $T:\overline{[y_{n}: n \in \mathbb{N}]} \to C[0,1]$ such that $\overline{[y_{n}: n \in \mathbb{N}]}$ and $T(\overline{[y_{n}: n \in \mathbb{N}]})$ are isometrically isomorphic. Observe that $\inf\limits_{n}\|T(y_{n})\|= \inf\limits_{n}\|y_{n}\| > 0$. Let $(x_{n})_{n=1}^{\infty}$ be a Schauder basis for $C[0,1]$, and $(x^{*}_{n})_{n}^{\infty}$ be the coefficient operators. By boundedness of operator $T$, since $(y_{n})_{n}$  converges to $0$ in the weak topology, we obtain that $\left( T(y_{n})\right)_{n}$ converges to $0$ in the weak topology. So

\begin{center}
$\lim\limits_{n\to \infty} x^{*}_{k}(Ty_{n})=0$ for all $k$ in $\mathbb{N}$.
\end{center}

By Bessaga-Pe\l{}czy\'nski's selection principle for normed spaces (Theorem \ref{mth:2}), there exists an essential basic subsequence $(T(y_{n_{k}}))_{k=1}^{\infty}$ of  $(T(y_{n}))_{n}^{\infty}$. Using that $T^{-1}: T(\overline{[y_{n}: n \in \mathbb{N}]}) \to \overline{[y_{n}: n \in \mathbb{N}]}$ is an isometric isomorphism, we have that $(y_{n_{k}})_{k=1}^{\infty}  = (T^{-1}(T(y_{n_{k}})))_{k=1}^{\infty}$ is an essential basic sequence. This concludes the proof of Corollary \ref{mcor:1}.
\end{proof}

\subsection{Proof of Banach problem for normed spaces}\label{sec:papp.2}

\begin{proof}[Proof of Theorem \ref{mthm:Banach-Mazur}]

We will use the same sequence  $(x_{n})_{n=1}^{\infty}$ obtained in the traditional proof (see e.g. \cite{daws,diestel}). By Banach-Grunblum's criterion for normed spaces, Theorem \ref{B-GforNormed}, we obtain that $(x_{n})_{n=1}^{\infty}$ is an essential basic sequence. In particular, $(x_{n})_{n=1}^{\infty}$ is an essential Schauder basis. Then, by Theorem (\ref{theorem11}), the canonical projections $(P_{n})_{n}$ are bounded and $\sup \limits_{n} \|P_{n}\| < + \infty $.
\end{proof}

\subsection{Proof of Spectral Theorem}\label{sec:papp.3}

We begin by proving a consequence of Theorem \ref{B-GforNormed}.

\begin{lemma}\label{lem.aux.111}
Let $N$ be an inner product space, $S=\{x_{n}: n \in \mathbb{N}\}$ be an orthonormal set of $N$. Then
\begin{itemize}
\item[$(i)$] For all $x \in N$, and any $n \in \mathbb{N}$, we have that $\sum\limits_{i=1}^{n} |\langle x,x_{i}\rangle|^2 \leq \|x\|^2$ (and then $\sum\limits_{i=1}^{\infty} |\langle x,x_{i}\rangle|^2 \leq \|x\|^2$);
\item[$(ii)$] $(x_{n})_{n=1}^{\infty}$ is an essential  Schauder basis for $\widehat{[x_{n}: n \in \mathbb{N}]} \subseteq \widehat{N}$;
\item[$(iii)$]  $(x_{n})_{n=1}^{\infty}$ is an essential Schauder basis for $\overline{[x_{n}: n \in \mathbb{N}]} \subseteq N$.
\end{itemize}

\end{lemma}
\begin{proof} Just note that  $0  \leq \langle x - \sum\limits_{i=1}^{n} \langle x,x_{i} \rangle x_{i}  , x - \sum\limits_{i=1}^{n} \langle x,x_{i} \rangle x_{i} \rangle$ to prove  item $(i)$.
To obtain items $(ii)$ and $(iii)$, by Banach-Grunblum's criterion for normed spaces, Theorem (\ref{B-GforNormed}), we are reduced to prove that for all  sequence of scalar $(a_{n})_{n=1}^{\infty}$ we have that $\left\Vert \sum \limits_{i=1}^{m} a_{i}x_{i}  \right\Vert \leqslant  \left\Vert \sum \limits_{i=1}^{n} a_{i}x_{i} \right\Vert$  whenever $n \geqslant m$. But it is clear by  orthonormality of $S$.
\end{proof}

We need these auxiliar results.

\begin{lemma}\label{spteL}
Let $N$ be  an inner product space and $T: N \to N$ be a compact self-adjoint operator. Then $\widehat{T}: \widehat{N} \to \widehat{N}$ is a compact self-adjoint operator where  $\widehat{T}$ is the bounded linear extension of $T$ and $\widehat{N}$ is the completion of $N$ such that $N$ is dense in $\widehat{N}$.
\end{lemma}
\begin{proof}
The proof is straightforward.
\end{proof}

\begin{lemma}\label{lem.aux.113}
Let $N$ be an inner product space and $U: N \to N$ be a compact self-adjoint operator. If $U$ is not the null-operator, there exist $x \in N \setminus \{0\}$ and $\lambda \in \mathbb{R}\setminus\{0\}$ such that $U(x)= \lambda x$.
\end{lemma}
\begin{proof}
Use the same arguments present in the proof of this result for compact self-adjoint operators on Hilbert spaces.
\end{proof}

\begin{proof}[Proof of Theorem \ref{sptns}] By Lemma \ref{spteL}, $\widehat{T}: \widehat{N} \to \widehat{N}$ is a  compact self-adjoint operator on a Hilbert space. There exist $\alpha_{i} \in \mathbb{R}\setminus \{0\}$ eigenvalues  of  $\widehat{T}$ for $i$ in a not empty subset $A$ of $\mathbb{N}$    such that $\ker(\widehat{T}- \alpha_{i}\widehat{I})$ is a finite dimensional subspace where $I: N \to N$ is given by $I(x) =x$ and  $\widehat{I}$ is the extension of $I$ to $\widehat{N}$ .  For each $n \in A$, note that   $N \cap \ker(\widehat{T}- \alpha_{n}\widehat{I}) =  \ker(T- \alpha_{n}I) \subseteq N$, define $t_{n} := \dim \ker(T- \alpha_{n}I)$ and $\widehat{t}_{n}:= \dim\ker(\widehat{T}- \alpha_{n}\widehat{I})$. Let be $J$ be the subset of $\mathbb{N}$ given by  $\{n \in A: N \cap \ker(\widehat{T}- \alpha_{n}\widehat{I}) \neq \{0\}  \}$, we have that $1 \leq t_{n} \leq \widehat{t}_{n}$. So for each $n\in J$ there exist $v_{n,1},\cdots,v_{n,t_{n}}$  such that  $T(v_{n,j}) = \alpha_{n} v_{n,j}$ for all $j \in \{1,\cdots, t_{n}\}$ and $\{v_{n,j}:  1 \leq j \leq t_{n}\}$ is an orthonormal set. Then $S= \{v_{n,j}: n \in J$ and $j \in\{1,\cdots, t_{n}\} \}$ is an orthonormal set of $N$. We may write the  orthonormal set $S$ as  $\{x_{\ell}  \in N:  \ell \in \mathcal{J}\}$ where $\mathcal{J}$ is a subset of $\mathbb{N}$, and $T(x_{\ell}) = \lambda_{\ell} x_{\ell}$ for each $\ell \in \mathcal{J}$.

For each $n \in A$, there exists  an orthonormal set  $\{v_{n,t_{n}+1},\cdots,v_{n,\widehat{t}_{n}}\}$ of   $\ker(\widehat{T}- \alpha_{n}\widehat{I}) $, and then $\{v_{n,1},\cdots,v_{n,\widehat{t}_{n}} \}$  is an orthonormal set of $\ker(\widehat{T}- \alpha_{n}\widehat{I}) $. Then $R= \{v_{n,j}: n \in A$ and $j \in\{t_{n}+1,\cdots, \widehat{t}_{n}\} \}$ is an orthonormal set of $\widehat{N}$. We may write the  orthonormal set $R$ as  $\{y_{\ell}  \in N:  \ell \in \widetilde{A}\}$ where $\widetilde{A}$ is a subset of $\mathbb{N}$, and $\widehat{T}(y_{\ell}) = \beta_{\ell} y_{\ell}$ for each $\ell \in \widetilde{A}$.

Suppose that $\spp (T)$ is an infinite set, then   $\mathcal{J} =  \mathbb{N}$. We are going to prove  items $(i)$ and $(ii)$.

 Let $\widehat{F}$ be the completion of $F=[x_{\ell}: \ell \in \mathcal{J}]$, $\widehat{G}$ be the completion of $G=[y_{\ell}: \ell \in \widetilde{A}]$, $\widehat{E}$ be the completion of $E=\left[ \bigcup\limits_{n=1}^{\infty}  \ker(\widehat{T}- \alpha_{n}\widehat{I})  \right]= [F \cup G]$. From the Spectral Theorem for Hilbert Spaces, we have that  $\widehat{N} = \widehat{E}  \oplus \ker (\widehat{T})$, and observe that $\widehat{E} = \widehat{F} \oplus \widehat{G}$.

 Since $S$ is an orthonormal set of $N$, from Lemma \ref{lem.aux.111}, we have that $(x_{\ell})_{\ell=1}^{\infty}$ is an essential  Schauder basis for $\widehat{F}$. For any $a \in \widehat{F}$,  there exists a  sequence of scalar $(\xi_{n})_{n=1}^{\infty}$ such that $a = \sum \limits_{i =1}^{\infty} \xi_{i}x_{i}$ where $\xi_{i} = \langle a, x_{i} \rangle$ for each $i \in \mathbb{N}$, so $\widehat{T}(a) =  \sum \limits_{i =1}^{\infty} x_{i} \lambda_{i} \langle a, x_{i} \rangle$.

  Since $R$ is an orthonormal set of $\widehat{N}$, from Lemma \ref{lem.aux.111}, we have that $(y_{\ell})_{\ell \in \widetilde{A}  }$ is an essential  Schauder basis for $\widehat{G}$. For any $b \in \widehat{G}$,  there exists a sequence of scalar $(\gamma_{n})_{n \in \widetilde{A}}$ such that $b = \sum \limits_{i\in \widetilde{A}} \gamma_{i}y_{i}$ where $\gamma_{i} = \langle b, y_{i} \rangle$ for each $i \in \widetilde{A}$, so $\widehat{T}(b) = \sum \limits_{i \in \widetilde{A}} y_{i} \beta_{i} \langle b, y_{i} \rangle$.

We need the following result.

\begin{lemma}\label{lem.aux.112}
$\widehat{T} (\widehat{F} ) \subseteq N \cap \widehat{F}$.
\end{lemma}
\begin{proof}[Proof of Lemma \ref{lem.aux.112}] Let $x$ be an arbitrary element of $\widehat{F}$.  Since $S$ is an orthonormal set of $N$, from Lemma \ref{lem.aux.111}, we have that $(x_{\ell})_{\ell=1}^{\infty}$ is an essential  Schauder basis for $\widehat{F}$. There exists a  sequence of scalar $(a_{n})_{n=1}^{\infty}$ such that $x = \sum \limits_{i=1}^{\infty} a_{i}x_{i}$ where $a_{i} = \langle x, x_{i} \rangle$ for each $i \in \widetilde{J}$.

Note that $\widehat{T}(x) = \sum \limits_{i=1}^{\infty} a_{i}\lambda_{i}x_{i} = \lim\limits_{n\to \infty}  \sum \limits_{i=1}^{n} a_{i}\lambda_{i}x_{i}$,  so $\widehat{T}(x)  \in \widehat{F}$.

Consider the sequence $(y_{n})_{n=1}^{\infty}$  where $y_{n}= \sum\limits_{i=1}^{n} x_{i}a_{i} \in N$ for each $n \in \mathbb{N}$. Using Lemma \ref{lem.aux.111}, we see that $\|y_{n}\|^2 = \sum\limits_{i=1}^{n} \left(a_{i} \right)^2  = \sum\limits_{i=1}^{n} |\langle x,x_{i}\rangle|^2 \leq \|x\|^2$. Then $(y_{n})_{n=1}^{\infty}$ is a bounded sequence of $N$, by compactness of $T$, there exists a subsequence $(T(y_{n_{k}}))_{n=1}^{\infty}$ that converges to some point $y \in N$. We have that $y_{n}$ converges to $x$, and then $\widehat{T}(y_{n})$ converges to $\widehat{T}(x)$, so $\widehat{T}(x) = y \in N$. This complets the proof of  Lemma \ref{lem.aux.112}.
\end{proof}

So for each $x \in \widehat{N}$ there exist $a_{x} \in  \widehat{F}$, $b_{x} \in  \widehat{G}$ and $c_{x} \in \ker (\widehat{T})$ such that $x = a_{x} + b_{x} + c_{x}$, and then $\widehat{T}(x) = \widehat{T}(a_{x}) +\widehat{T}(b_{x})$ where $\widehat{T}(a_{x}) =\sum \limits_{i=1}^{\infty} x_{i} \lambda_{i} \langle a_{x}, x_{i} \rangle$ and  $\widehat{T}(b_{x}) = \sum \limits_{i \in \widetilde{A}} y_{i} \beta_{i} \langle b_{x}, y_{i} \rangle$. By  Lemma \ref{lem.aux.112}, we get that $\widehat{T}(a_{x})  \in N$.

Now, suppose that $x \in N$, so $T(x)= \widehat{T}(x) \in N$ and $\widehat{T}(x) = \widehat{T}(a_{x}) +\widehat{T}(b_{x})$ with $\widehat{T}(a_{x}) \in N$,  then $\widehat{T}(b_{x}) \in N$. Note that  $\langle a_{x}, x_{i} \rangle = \langle x, x_{i} \rangle$ and $\langle b_{x}, y_{i} \rangle = \langle x, y_{i} \rangle$. For $x \in N$, we obtain that $T(x) =  \sum \limits_{i=1}^{\infty} x_{i} \lambda_{i} \langle x, x_{i} \rangle + \sum \limits_{i \in \widetilde{A}} y_{i} \beta_{i} \langle x, y_{i} \rangle$ where  $\sum \limits_{i=1}^{\infty} x_{i} \lambda_{i} \langle x, x_{i} \rangle, \sum \limits_{i \in \widetilde{A}} y_{i} \beta_{i} \langle x, y_{i} \rangle \in N$. This proves item $(i)$.

Suppose that $\spp (T) =\spp (\widehat{T})$, this implies that $\widetilde{A} = \emptyset$, and then $T(x) =  \sum \limits_{i=1}^{\infty} x_{i} \lambda_{i} \langle x, x_{i} \rangle$ for all $x \in N$. The item $(ii)$ is proved.

It remains to show item $(iii)$. Now, $\spp (T)$  is a natural number $k$. This implies that $S= \{v_{n,j}: n \in J$ and $j \in\{1,\cdots, t_{n}\} \}$ is a finite orthonormal set of $N$ such that $T(v_{n,j}) = \alpha_{n} v_{n,j}$ for all $j \in \{1,\cdots, t_{n}\}$ for each $n \in J$.  We may write the  orthonormal set $S$ as  $\{x_{\ell}  \in N:  \ell \in \{1,\cdots,s\}\}$ for some $s \in \mathbb{N} \setminus \{0\}$ where $T(x_{\ell}) = \lambda_{\ell} x_{\ell}$ for each $\ell \in \{1,\cdots, s\}$.

Note that  $F= [x_{\ell}: \ell \in \{1,\cdots, s\} ] $ is a Banach space since $F$ is a finite dimensional space. This implies that $N = F \oplus F^{\perp}$  where  $F^{\perp} = \{w \in N: \langle w,z \rangle =0$ for all $z \in F \}$.

We claim that $F^{\perp} = \ker (T)$. A trivial verification shows that $\ker (T) \subseteq F^{\perp}$. Using that $T(F) \subseteq F$, we have that $T(F^{\perp}) \subseteq F^{\perp}$. Define $U=T|_{F^{\perp}}: F^{\perp} \to F^{\perp}$, and note that $U$ is a compact self-adjoint operator. Suppose that $F^{\perp}$ is not contained in $\ker (T)$, there exists $q \in F^{\perp}$ such that $T(q) \neq 0$, then $U(q) = T(q) \neq 0$ and $U$ is not null-operator. By Lemma \ref{lem.aux.113}, there exist $\lambda \in \mathbb{R}$  and $p \in F^{\perp} \setminus \{0\}$ such that $T(p)=U(p) = \lambda p$, so $p \in F$. We obtain that $p \in F \cap F^{\perp}$, so $p =0$. This contradiction shows that $U$ is the null operator, and the claim is proved.

For $x \in N = F \oplus \ker (T)$, there exist $a_{x} \in F $ and $b_{x} \in \ker (T)$  such that $x = a_{x} + b_{x}$. Note that for $a \in F$, we have that $a = \sum \limits_{i=1}^{s}  \langle a,x_{i} \rangle x_{i}$ and $T(a) = \sum \limits_{i=1}^{s}  \langle a,x_{i} \rangle \lambda_{i} x_{i}$. We obtain that $T(x) = T(a_{x}) =  \sum \limits_{i=1}^{s}  \langle x,x_{i} \rangle \lambda_{i} x_{i}$ since  $\langle a_{x},x_{i} \rangle = \langle x,x_{i} \rangle$ for all $i \in \{1,\cdots,s\}$, and we are done.
\end{proof}

\section{Proofs of main results}\label{sec:main.result}
\subsection{The Banach-Grunblum's criterion for normed spaces}\label{sec:main.result.A}

\begin{proof}[Proof of Theorem \ref{B-GforNormed}] It is clear that $(i)$ implies  $(ii)$. We are going to show that $(ii)$ proves  $(iii)$.

Suppose that  $(x_{n})_{n=1}^{\infty}$ is an essential Schauder basis for $\overline{[x_{n}: n \in \mathbb{N}]}$. Now, consider the canonical projections $(P_{n})_{n=1}^{\infty}$ in $E=\overline{[x_{n}: n \in \mathbb{N}]}$. By Theorem (\ref{theorem11}), we know that
\begin{center}
$1 \leqslant   K_{(x_{n})_{n=1}^{\infty}} = \sup \limits_{n} \| P_{n}\| < \infty$
\end{center}

Given a   sequence of scalar $(a_{n})_{n=1}^{\infty}$, if $n \geqslant m$, then

\begin{align} \label{eq.653}
\nonumber \Vert \sum \limits_{i=1}^{m} a_{i}x_{i} \Vert\\ \nonumber = \Vert P_{m}(\sum \limits_{i=1}^{n} a_{i}x_{i})\Vert \\  \leqslant \|P_{m}\|\cdot \Vert \sum \limits_{i=1}^{n} a_{i}x_{i} \Vert\\ \nonumber \leqslant K_{(x_{n})_{n=1}^{\infty}} \Vert \sum \limits_{i=1}^{n} a_{i}x_{i} \Vert.
\end{align}

We are reduced to proving $(i)$ from $(iii)$. Suppose that (\ref{eq12}) holds for $M \geqslant 1$. A straightforward calculation shows that the set $\{x_{\ell}: \ell \in \mathbb{N}\}$   is linearly independent.

For each $n \in \mathbb{N}$ consider the linear functional given by

\begin{center}
$\varphi_{n}: [x_{\ell}: \ell \in \mathbb{N}] \to \mathbb{K}\text{, }\varphi_{n}\left(\sum \limits_{i=1}^{k} a_{i}x_{i}\right)=a_{n}$
\end{center}
and the linear operator

\begin{center}
$T_{n}: [x_{\ell}: \ell \in \mathbb{N}] \to [x_{\ell}: \ell \in \mathbb{N}]\text{, }T_{n}\left(\sum \limits_{i=1}^{k} a_{i}x_{i}\right)=\sum \limits_{i=1}^{n} a_{i}x_{i}$
\end{center}
defining $a_{k+1}=\cdots=a_{n}=0$ if necessary.

It is clear that $\varphi_{n}$ is a bounded linear operator for all $n$.

By (\ref{eq12}), we have that

\begin{center}
$\left\Vert T_{n} \left( \sum \limits_{i=1}^{k} a_{i}x_{i} \right) \right\Vert = \left\Vert \sum \limits_{i=1}^{n} a_{i}x_{i} \right\Vert \leq M \left\Vert \sum \limits_{i=1}^{k} a_{i}x_{i} \right\Vert$
\end{center}
and $T_{n}$ is bounded with $\|T_{n}\| \leq M$.

There exists a bounded linear extension $\Phi_{n}: \widehat{F}\to \mathbb{K}$ such that $ \Phi_{n}|_{F}=\varphi_{n}$ and $ \|\Phi_{n} \|= \| \varphi_{n} \|$ where $F=[x_{\ell}: \ell \in \mathbb{N}]$.

Consider the bounded linear operator $T_{n}: F \to F$, so we may consider this bounded linear operator $T_{n}: [x_{\ell}: \ell \in \mathbb{N}] \to \widehat{F}$. There exists a bounded linear extension $R_{n}: \widehat{F}\to \widehat{F} $ such that  $ R_{n}|_{F}=T_{n}$ and $ \| R_{n} \|= \| T_{n} \|$.

Note that $T_{n}(z) = \sum \limits_{i=1}^{n} a_{i}x_{i} = \sum \limits_{i=1}^{n} \varphi_{i}(z)x_{i} $ for all $z \in F$.

For all $x \in \widehat{F}$ we have that

\begin{align}\label{eq156}
R_{n}(x) =  \sum \limits_{i=1}^{n} \Phi_{i}(x)x_{i}.
\end{align}

In fact, let $x\in \widehat{F} $ be an arbitrary element, so $x= \lim \limits_{k\to\infty} y_{k}$ where $y_{k} \in [x_{\ell}: \ell \in \mathbb{N}]$. But $ R_{n}$ is a bounded linear operator, so

\begin{center}
$R_{n}(x) =   \lim \limits_{k\to\infty} R_{n}(y_{k}) =  \lim \limits_{k\to \infty}\sum \limits_{i=1}^{n} \varphi_{i}(y_{k})x_{i} = \lim \limits_{k\to \infty}\sum \limits_{i=1}^{n} \Phi_{i}(y_{k})x_{i} =  \sum \limits_{i=1}^{n} \Phi_{i}(x)x_{i} $.
\end{center}

We obtain that $R_{n}(x) = \sum \limits_{i=1}^{n} \Phi_{i}(x)x_{i}$ for all $x$ in  $ \widehat{F}$. Now, given $x \in \widehat{F} $ and $\varepsilon > 0$, there exists $y = \sum \limits_{j=1}^{m} a_{j}x_{j} \in [x_{\ell}: \ell \in \mathbb{N}]$ for some $m \geqslant 1  $ such that $\| x-y \| < \varepsilon$. For $n > m$, we have

\begin{align*}
\|x- R_{n}(x) \| \\ \leqslant \| x-y \| + \| R_{n}(y)-y \| \\ + \| R_{n}(x)-R_{n}(y) \| 
\\ \leqslant \| x-y \| + \| y -y \| \\+\|R_{n}\|\cdot \| x- y \| \leq (1+M)\varepsilon.
\end{align*}

Then $x= \lim \limits_{n \to \infty} R_{n}(x)$. Using (\ref{eq156}), we obtain

\begin{center}
$x= \lim \limits_{n \to \infty} R_{n}(x) = \lim \limits_{n \to \infty} \sum \limits_{i=1}^{n} \Phi_{i}(x)x_{i} = \sum \limits_{i=1}^{\infty} \Phi_{i}(x)x_{i}$.
\end{center}

The uniqueness of the above representation is clear. So $(x_{n})_{n=1}^{\infty}$ is a Schauder basis for $\widehat{F} $.

Given $x \in \widehat{F} $ with $x =  \sum \limits_{i=1}^{\infty} a_{i}x_{i}$. By (\ref{eq12}), for each $n \in \mathbb{N}$ we have that

\begin{align}
\left\Vert \sum \limits_{i=1}^{n} a_{i}x_{i} \right\Vert \leq M \left\Vert \sum \limits_{i=1}^{\infty} a_{i}x_{i} \right\Vert = M \Vert x\Vert.
\end{align}

Then

\begin{align}\label{eq13}
\sup \limits_{n} \Vert \sum \limits_{i=1}^{n} a_{i}x_{i} \Vert \leq M \Vert \sum \limits_{i=1}^{\infty} a_{i}x_{i} \Vert = M \Vert x\Vert.
\end{align}

and note that

\begin{align}\label{eq14}
\nonumber \| x \| =  \left \Vert  \sum \limits_{i=1}^{\infty} a_{i}x_{i} \right \Vert \\ = \lim \limits_{n\to \infty} \left \Vert \sum \limits_{i=1}^{n} a_{i}x_{i} \right \Vert \leq \sup \limits_{n} \left \Vert \sum \limits_{i=1}^{n} a_{i}x_{i} \right \Vert.
\end{align}

Using (\ref{eq13}) and  (\ref{eq14}), we obtain that $T_{\widehat{F}}: \mathcal{L}_{\widehat{F}} \to \widehat{F}$ given by $T_{\widehat{F}}((a_{n})_{n=1}^{\infty}) = \sum \limits_{n=1}^{\infty} a_{n}x_{n}$ is a linear isomorphism. Then $(x_{n})_{n=1}^{\infty}$ is an essential Schauder basis for $\widehat{F}$, and we are done.
\end{proof}

\subsection{The Bessaga-Pe\l{}czy\'nski Theorem for normed spaces}\label{sec:main.result.B}

\begin{proof}[Proof of Theorem \ref{th:1}]
Given a sequence $(a_{n})_{n=1}^{\infty}$  of scalars,

\begin{align*}
\Vert\sum \limits_{i=1}^{n} a_{i}(x_{i}-y_{i}) \Vert \\= \Vert\sum \limits_{i=1}^{n} x_{i}^{*}(\sum \limits_{j=1}^{n} a_{j}x_{j})(x_{i}-y_{i}) \Vert \\ \leqslant \sum \limits_{i=1}^{n} \vert x_{i}^{*}(\sum \limits_{j=1}^{n} a_{j}x_{j})\vert.\| x_{i}-y_{i} \| \\
\leqslant \Vert \sum \limits_{j=1}^{n} a_{j}x_{j}  \Vert (\sum \limits_{i=1}^{n} \Vert x_{i}^{*}\Vert.\Vert x_{i}-y_{i} \Vert)\\ \leqslant \lambda \Vert \sum \limits_{i=1}^{n} a_{i}x_{i}  \Vert,
\end{align*}
we obtain that

\begin{center}
$\left\vert \left\Vert \sum \limits_{i=1}^{n} a_{i}x_{i} \right\Vert - \left\Vert \sum \limits_{i=1}^{n}a_{i}y_{i} \right\Vert \right\vert \leqslant \left\Vert\sum \limits_{i=1}^{n} a_{i}x_{i}-  \sum \limits_{i=1}^{n}a_{i}y_{i} \right\Vert \leqslant \lambda \left\Vert \sum \limits_{i=1}^{n} a_{i}x_{i}  \right\Vert$.
\end{center}

So
\begin{align} \label{eq10.15}
\nonumber (1-\lambda) \Vert \sum \limits_{i=1}^{n} a_{i}x_{i} \Vert\\ \leqslant  \Vert \sum \limits_{i=1}^{n} a_{i}y_{i} \Vert \\ \nonumber \leqslant  (1+\lambda) \Vert\sum \limits_{i=1}^{n} a_{i}x_{i} \Vert,
\end{align}
for all $n$ in $\mathbb{N}$. But  $(x_{n})_{n=1}^{\infty}$ is an essential basic sequence in $X$. By Banach-Grunblum's criterion for normed spaces, Theorem (\ref{B-GforNormed}), there exists $M \geqslant 1$ such that if $m \geqslant n$ then

\begin{align} \label{eq10.16}
\nonumber \Vert\sum \limits_{i=1}^{n} a_{i}y_{i} \Vert \\ \leqslant  (1+\lambda) \Vert\sum \limits_{i=1}^{n} a_{i}x_{i} \Vert\\ \nonumber \leqslant (1+\lambda)M \Vert \sum \limits_{i=1}^{m} a_{i}x_{i} \Vert \\ \nonumber \leqslant \frac{(1+\lambda)M}{(1-\lambda)} \Vert\sum \limits_{i=1}^{m} a_{i}y_{i} \Vert.
\end{align}

From Banach-Grunblum's criterion for normed spaces, Theorem (\ref{B-GforNormed}), $(y_{n})_{n=1}^{\infty}$ is an essential basic sequence in $X$.

We are going to show that the series $\sum \limits_{n=1}^{\infty} a_{n}y_{n}$ converges if  the series $\sum \limits_{n=1}^{\infty} a_{n}x_{n}$ is convergent.

Define $F= [x_{n}: n \in \mathbb{N}]$  and  $Y= \overline{[y_{n}: n \in \mathbb{N}]}$. Consider $T:F \to Y$ the linear operator given by $T \left( \sum\limits_{i=0}^{k} x_{i}a_{i} \right) = \sum\limits_{i=0}^{k} y_{i}a_{i}$ for $x = \sum\limits_{i=0}^{k} x_{i}a_{i}$ in $F$. Note that if $x \neq 0$, we have that $\left\Vert  T \left( \frac{x}{\|x\|} \right) \right\Vert = \frac{\left\Vert T(x) \right\Vert}{\left\Vert x \right\Vert} = \frac{ \left\Vert\sum \limits_{i=1}^{k} a_{i}y_{i} \right\Vert}{ \left\Vert \sum \limits_{i=1}^{k} a_{i}x_{i} \right\Vert} $ for $x = \sum\limits_{i=0}^{k} x_{i}a_{i}$ in $F$, and by (\ref{eq10.15}) we obtain that $\left\Vert T \left( \frac{x}{\|x\|} \right) \right\Vert \leqslant   (1+\lambda)$.

So $T:F \to Y$ is a bounded linear operator. We may consider $T$ as the following bounded operator $T: \overline{F} \subseteq X \to Y$. There exists a bounded linear extension $\widehat{T}:\widehat{F} \to\widehat{Y}$ such that  $\widehat{T}|_{\overline{[x_{n}: n \in \mathbb{N}]}}  = T$ with $\|\widehat{T} \| = \|T\|$.

Now, suppose that $x=\sum \limits_{n=1}^{\infty} a_{n}x_{n} \in \overline{F} \subseteq X$ is convergent. This implies that

\begin{center}
$T(x)=\widehat{T}(x)=  \widehat{T}\left(\lim\limits_{n \to \infty} \sum\limits_{i=1}^{n} a_{i}x_{i}\right)  =  \lim\limits_{n \to \infty}   \widehat{T} \left( \sum\limits_{i=1}^{n} a_{i}x_{i} \right)  = \lim\limits_{n \to \infty}   \sum\limits_{i=1}^{n} a_{i}y_{i}$,
\end{center}

then the series $T(x)=\widehat{T}(x) = \sum \limits_{i=1}^{\infty} a_{i}y_{i}\in Y \subseteq X $ is convergent.

Analogously, we can show that if   the $\sum \limits_{i=1}^{\infty} a_{i}y_{i}$ is convergent, then $\sum \limits_{i=1}^{\infty} a_{i}x_{i}$ is convergent.
\end{proof}

\section{Conclusion}

We give a generalization of the well known criterion of Banach-Grunblum and the Bessaga-Pe\l{}czy\'nski Theorem in normed spaces setting (not necessarily Banach one). As application, we show the Principle of Selection of Bessaga-Pe\l{}czy\'nski for normed spaces and the Spectral Theorem for compact self-adjoint operators on inner product spaces. 

In the case of Banach spaces (normed and complete), this theory can be applied to solve differential equations with a low computational cost \cite{PR2005}. This is an important issue in applied sciences, because many problems can be formulated through differential/integral equations. Although our work in this paper is not specifically on applied science, we expect that our generalization of this theory to spaces beyond Banach ones help to improve its application. Also, in  it has been developed a method, by using the least-squares method and a Schauder basis \cite{PPRR2008}, which provides a numerical solution for a wide class of linear differential or integral equations. The least squares method is a statistical technique used to find the line of best fit for a given set of data points. This method minimizes the sum of the squares of the residuals, which are the differences between the observed values and the values predicted by the model. We hope that our techniques help to improve these results and be useful to applied sciences in some moment.

\flushleft \par{\textit {Acknowledgment:}}\\
No fee has been paid for this publication. 

The authors are grateful to Instituto de Matem\'atica da Universidade Federal da Bahia for the hospitality in preparation of this paper.


\def\cprime{$'$}


\begin{thebibliography}{9}

\bibitem{AlKa06}
F. ~Albiac. N.J. ~Kalton.
\newblock Topics in Banach Space Theory.
\newblock {\em Graduate Texts in Mathematics, 233.}
\newblock  New York: Spring-Verlag, 2006.

\bibitem{Banach32}
S. Banach.
\newblock{Th\'eorie des op\'erations lin\'eaires.}
\newblock{Monografie Matematyczne 1.}
\newblock Warsaw, 1932.

\bibitem{bespel58}
C. ~Bessaga; A. ~Pe\l{}czy\'nski.
\newblock On bases and unconditional convergence of series in Banach spaces.
\newblock {\em Studia Mathematica (1958).}
\newblock  Volume: 17, Issue: 2, page 151-164.



\bibitem{bop}
G. ~Botelho, D. ~Pellegrino, E. ~Teixeira.
\newblock Fundamentos de analise funcional.
\newblock {\em Colecao textos universitarios.}
\newblock Rio de Janeiro: SBM, 2015.



\bibitem{Sw10}
C. ~Swartz.
\newblock Elementary Functional Analysis.
\newblock {\em  World Scientific.}
\newblock Singapore, 2010.




\bibitem{Day62}
M.M. ~Day.
\newblock On the Basis Problem in Normed Spaces.
\newblock {\em Proceedings of the American Mathematical Society.}
\newblock Vol. 13, No. 4 (Aug., 1962), pp. 655-658.



\bibitem{daws}
M. ~Daws.
{Introduction} to Bases in Banach Spaces, 2005.

www1.maths.leeds.ac.uk/~mdaws/\\pubs/bases.pdf

\bibitem{diestel}
J. ~Diestel.
\newblock Sequences and Series in Banach Spaces.
\newblock {\em Graduate texts in mathematics; 92}
\newblock  New York: Spring-Verlag, 1984.

\bibitem{enflo}
P. ~Enflo.
\newblock A counterexample to the approximation problem in Banach spaces. \newblock {\em Acta Mathematica.}
\newblock  130 (1): 309-317, 1973.

\bibitem{G58}
B. R. Gelbaum.
\newblock Notes on Banach spaces and bases.
\newblock {\em An. Acad. Bras. Cienc.}
\newblock 30 (1958), 29-36.

\bibitem{GoMa}
W.T. ~Gowers, B.~Maurey.
\newblock The unconditional basic sequence problem.
\newblock arXiv:math/9205204, 1992.


\bibitem{Mc72}
C. W. ~McArthur.
\newblock Developments in Schauder basis theory.
\newblock Bull. Amer. Math. Soc. 78 (1972), 877-908.


\bibitem{Me98}
R.E. ~Megginson.
\newblock An introduction do Banach space theory.
\newblock Springer, 1998.

\bibitem{PPRR2008}
A.~Palomares, M.~Pasadas, V.~Ram\'irez, M.~Ruiz Gal\'an.
\newblock A convergence result for a least-squares method using Schauder bases.
\newblock Mathematics and Computers in Simulation 77 (2008) 274--28.

\bibitem{PR2005}
A.~Palomares, M.~Ruiz Gal\'an.
\newblock Isomorphisms, Schauder bases in Banach spaces and numerical solution of integral and differential equations.
\newblock Numerical Functional Analysis and Optimization, 26(1):129--137, 2005.

\bibitem{Sch1927}
J.~Schauder.
\newblock Theorie stetiger Abbildungen in Funktionalr\"umen.
\newblock Math. Z., 26, 47-65, 1927.

\bibitem{Sch1928}
J.~Schauder.
\newblock Eine eigenschaft des haarschen orthogonalsystems.
\newblock Math. Z., 28, 317-320, 1928.

\bibitem{VK2022}
D.~Varsamis, A.~Kamilali.
\newblock Calculation of Determinant of a two-variable Polynomial Matrix in Complex Basis.
\newblock WSEAS Transactions on systems and control, (2022) Vol.17. DOI: 10.37394/23203.2022.17.44

\bibitem{Yar2022}
M.~Yaremenko.
\newblock Trace class in separable reflexive Banach spaces, Lidskii theorem.
\newblock Equations, an International Journal of Mathematical and Computational Methods in Science and Engineering, vol.2, 19, 2022. E-ISSN: 2732--9976 DOI:10.37394/232021.2022.2.19 
\end{thebibliography}

\end{document}